\newtheorem{theorem}{Theorem}
\newtheorem{lemma}{Lemma}
\def\PC{\mathcal{P}}
\def\MC{\mathcal{M}}
\def\B{\mathbf{B}}
\def\R{\mathbf{R}}
\def\1{\mathbf{1}}
\def\al{\alpha}
\def\be{\beta}
\def\pa{\partial}
\def\ep{\epsilon}
\def\de{\delta}
\def\ga{\gamma}
\newcommand{\la}{\lambda}
\newcommand{\si}{\sigma}
\newcommand{\Om}{\Omega}
\begin{document}
	
\title{Regularity and Sensitivity for McKean-Vlasov Type SPDEs Generated by Stable-like Processes
}
\author{
	Vassili N. Kolokoltsov\thanks{Department of Statistics, University of Warwick,
		Coventry CV4 7AL UK, Email: v.kolokoltsov@warwick.ac.uk, and associate member of Faculty of Applied Mathematics and Control Processes of Saint-Petersburg State University, and Institute of Informatics Problems of the Federal Research Center ``Computer Science and Control'' of RAS
}
	and Marianna Troeva\thanks{Research Institute of Mathematics, North-Eastern Federal University,
		58 Belinskogo str., Yakutsk 677000 Russia, Email: troeva@mail.ru
	}}
	\maketitle

\begin{abstract}
		In this paper we study the sensitivity of nonlinear stochastic differential equations of McKean-Vlasov type generated by stable-like processes. By using the method of stochastic characteristics, we transfer these equations to the non-stochastic equations with random coefficients thus making it possible to use the results obtained for nonlinear PDE of McKean-Vlasov type generated by stable-like processes in the previous works. The motivation for studying sensitivity of nonlinear McKean-Vlasov SPDEs arises naturally from the analysis of the mean-field games with common noise.
\end{abstract}

\noindent
{\bf Mathematics Subject Classification (2010)}: {60H15, 60J60, 91A06, 91A15,\\ 49L20, 82C22}
\smallskip\par\noindent
{\bf Keywords}:  McKean-Vlasov SPDE, sensitivity, stable-like processes,  mean-field games with common noise

\section{Introduction}

In this paper we shall study the well-posedness and sensitivity of the following stochastic partial differential equation of the McKean-Vlasov type generated by stable-like processes, 
\begin{equation}
\label{eqlimMacVlaSPDE}
d(f, \mu_t)=(L(\mu_t)f +\frac12
(\sigma_{com}\sigma_{com}^T\nabla,\nabla)f, \mu_t) \, dt +(\si_{com}\nabla f, \mu_t) \, dW_t.
\end{equation}

This equation is written in the weak form meaning that it should hold for all $f \in C^2(\R^d)$.
Here $x\in \R^d$, $W_t$ is $d'$-dimensional standard Brownian motion,
$\sigma_{com}$ is a constant $d\times d'$ matrix,
\begin{equation}
\label{eqlimMacVlaSPDE1}
L(\mu_t)f (x)= (b(x, \mu_t),\nabla)f (x) - a(x)|\Delta|^{\alpha/2}f(x),
\end{equation}
is the generator for the stable-like
processes in $\mathbf{R}^{d}$ with stability index $\alpha\in (1,2)$,
$\mu_t \in \mathbf{{\mathcal{M}}}\left(\mathbf{R}^{d} \right)$ (the
set of bounded positive Borel measures on $\mathbf{R}^{d}$), the drift
 $b(x,\mu)$  and the scale coefficient $a(x)$ are continuous functions.

By the usual rule $Y\circ dX=Y dX +\frac12 dY dX$, equation (\ref{eqlimMacVlaSPDE})
rewrites in a more transparent Stratonovich form as
\begin{equation}
\label{eqlimMacVlaSPDEst}
d(f, \mu_t)=(L(\mu_t) f, \mu_t) \, dt +
(\si_{com} \nabla f, \mu_t) \, \circ dW_t.
\end{equation}

Recall that the fractional Laplacian can be expressed via the integral
operator, i.e.,
\begin{equation}\label{fractdiffer}
|\Delta|^{\alpha/2}f(x)=C_{\al}\int_{\mathbf{R}^{d}}\left(f(x+y)-f(x)-\frac{(\nabla f(x),y)}{1+|y|^2}\right)\frac{dy}{|y|^{d+\alpha}},
\end{equation}
with a certain constant $C_{\al}$.

The motivation for studying sensitivity of McKean-Vlasov SPDEs (\ref{eqlimMacVlaSPDE}) and the notation $\sigma_{com}$
arise naturally from the analysis of the mean-field games with common noise, in which positions of $N$ agents are governed by the system of SDEs
\begin{equation}
	\label{eqstartSDEN}
	dX_t^i=b(X_t^i, \mu_t^N, u_t^i)\, dt +
	\si_{com}dW_t + a^{1/\al}(X_t^i) dY^i_t,
\end{equation}
where all $X_t^i$ belong to $\R^d$, $W_t$ is a multidimensional standard Brownian motion
referred to as the common noise and $Y^i_t$ are independent symmetric L\'evy processes with the index $\al$.   
The parameters $u_t^i\in U\subset \R^m$ are controls available to the players, trying to minimize their payoffs. McKean-Vlasov equation is a forward component of the forward-backward systems of equations expressing of the mean-field game consistency problem, the backward component being the Hamilton-Jacobi equation. 

There is an extensive literature on properties of McKean-Vlasov SPDEs  (see e.g. \cite{KurXi, DaVa95, HuNu13, HuMalhCa06, KolTro15, CarmDel18, CrMcMur, CardDelLasLi15, Nour13, BenFrYam15, BasHilKol17, BardiPri15}  
 and references therein), based on the diffusion proccesses.  The well-posedness of the McKean-Vlasov SPDE was shown in \cite{KurXi} in the class of $L_2$-functions, and for 
 measures in \cite{DaVa95}, though under an additional monotonicity assumption. Our paper is the fisrt one where the McKean-Vlasov type SPDEs for the stable-like underlying Markov processes are analysed.

The sensitivity analysis for the nonlinear McKean-Vlasov diffusions and nonlinear stable-like processes stressing precise estimates of growth of the solutions and their derivatives with respect to the initial data, under rather general assumptions on the coefficients was studied in \cite{KolTro17, Kol1, KolTro16}. The exact estimates become important when treating the extension of these equations having random coefficient, since the noise is usually assumed to be unbounded.

Our paper is organized as follows. We first summarize the method of stochastic characteristics, see \cite{Kunita, KoTyu03, KolTro17}
in a simplified version used here. It is our main tool that allows us to turn stochastic  McKean-Vlasov equation into a non-stochastic equation with random coefficients.
Then we prove the well-posedness of equation \eqref{eqlimMacVlaSPDEst}. Finally
we prove our main results on the smooth sensitivity of this equation with respect to initial data.

The following basic notations will be used:

$C^n(\mathbf{R}^d)$ is the Banach space of $n$ times continuously differentiable and bounded functions $f$ on $\mathbf{R}^d$  such that each derivative up to and including order $n$ is bounded, equipped with norm $\|f\|_{C^n}$ which is the sum of the suprema of the magnitudes of all mixed derivatives up to and including order $n$.

$C_{\infty}(\R^d)$ is a Banach space of bounded continuous functions $f:\R^d\to \R$ with $ \lim_{x\to \infty}f(x)=0$, equipped with sup-norm.

$C_{\infty}^n(\mathbf{R}^d)$ is a closed subspace of $C^n(\mathbf{R}^d)$ with $f$ and all its derivatives up to and including order $n$ belonging to $C_{\infty}(\mathbf{R}^d)$.

If $\MC$ is a closed subset of a Banach space $\B$, then

$C([0,T], \MC)$ is a metric space of continuous functions $t \rightarrow \mu_t \in \MC$
with distance $\|\eta-\xi\|_{C([0,T], \MC)}=\sup_{t\in [0,T]} \|\eta_t-\xi_t\|_\B$. An element from $C([0,T], \MC)$ is written as $\{\mu_.\}=\{\mu_t, t\in[0,T]\}$.


$\MC (\R^d)$ is a Banach space of finite signed Borel measures on  $\R^d$.

$\MC^+(\R^d)$ and  $\PC (\R^d)$ the subsets of $\MC (\R^d)$ of positive and positive normalised (probability) measures, respectively.

Let $\MC_{<\la}(\R^d)$ (resp. $\MC_{\le \la}(\R^d)$
 or $\MC_{\la}(\R^d)$) and $\MC_{<\la}^+(\R^d)$ (resp. $\MC_{\le \la}^+(\R^d)$ or $\MC_{\la}(\R^d)$)
denote the parts of these sets
containing measures of the norm less than $\la$ (resp. not exceeding $\la$ or equal $\la$).

Let $C^{k\times k}(\R^{2d})$ denote the subspace of $C(\R^{2d})$ consisting of functions $f$ such that the partial derivatives $\pa^{\al+\be}f/\pa x^{\al} \pa y^{\be}$ with multi-index $\al,\be$, $|\al|\le k, |\be|\le k$, are well defined and belong to $C(\R^{2d})$. Supremum of the norms of these derivatives provide the natural norm for this space.

%

For a function $F$ on $\MC^+_{\le \la}(\R^d)$ or $\MC_{\le \la}(\R^d)$ the variational derivative is defined as the directional derivative of $F(\mu)$ in the direction $\delta_x$:
\[
\frac{\de F(\mu)}{\de \mu(x)}=\frac{d}{dh}|_{h=0}F(\mu +h\de_x).
\]
The higher derivatives
$\delta^l F(\mu)/\de \mu(x_1)...\de \mu(x_l)$ are defined inductively.


Let $C^k(\MC_{\le \la}(\R^d))$ denote the space of functionals such that the $k$th order variational derivatives are well defined
and represent continuous functions of all variables with measures considered in their weak topology.

Let $C^{k,l}(\MC_{\le \la}(\R^d))$ denote the subspace of functionals $F$ from $C^k(\MC_{\la}(\R^d))$ such that
$\delta^m F(\mu)/\de \mu(.)...\de \mu(.) \in C^l(\R^d)$ for all $m\le k$.

Let $C^{2,k\times k}(\MC_{\le \la}(\R^d))$ be the space of functionals such that their second order variational derivatives
are continuous as functions of all variables and belong to
$C^{k\times k}(\R^{2d})$ as functions of the spatial variable; the norm of this space is
\[
\|F\|_{C^{2,k\times k}(\MC_{\le \la}(\R^d))}
=\sup_{\mu\in \MC_{\la}(\R^d)}\left \|\frac{\de^2 F}{\de \mu (.) \de \mu(.)}\right\|_{C^{k \times k}(\R^{2d})}.
\]

$(f,\mu)=\int f(x)\mu(dx)$ denotes the usual pairing of functions and measures on $\R^d$.

\section{Stochastic characteristics for commuting groups}

Let $L_t(\mu)$ be a family of operators defined on a dense subspace $D$ of a Banach space $B$
and depending on $\mu \in B^*$ as a parameter. Let $\Om=(\Om_1, \cdots,  \Om_k)$ be a vector of commuting
 linear operators $D\to B$, which generate commuting strongly continuous groups $T_j=\exp\{t\Om_j\}$ in $B$,
 and hence also weakly continuous groups  $T_j^*=\exp\{t\Om^*_j\}$ in $B^*$.
Let $W_t=(W^1, \cdots , W^k_t)$ be a $k$-dimensional standard Brownian motion.
We are interested in the weak Stratonovich SDE in $B^*$:
\begin{equation}
\label{eqgenchar1}
d(f,\mu_t)=(L_t(\mu_t) f, \mu_t) dt +(\Om f, \mu_t) \circ dW_t=(L_t(\mu_t) f, \mu_t) dt +\sum_j(\Om_jf, \mu_t) \circ dW^j_t.
\end{equation}

\begin{lemma}
\label{stochchara}
Under the change of the unknown function $\mu_t$ to
\[
\zeta_t=  \exp\{-\Om^* W_t\} \mu_t= \exp\{-\sum_j \Om^*_j W^j_t\} \mu_t,
\]
equation \eqref{eqgenchar1} transfers to a non stochastic PDE with random coefficients:
\begin{equation}
\label{eqgenchar2}
\frac{d}{dt}(f,\zeta_t)=(\tilde L_t (\zeta_t)f,\zeta_t)= (L^{dress}_t(\exp\{\Om^* W_t\} \zeta_t) f, \zeta_t),
\end{equation}
with
\begin{equation}
\label{eqgenchar3}
L^{dress}_t(\mu)f= \exp\{\Om W_t\}L_t(\mu) \exp\{-\Om W_t\}f, 
\quad \tilde L_t (\zeta_t)=L^{dress}_t(\exp\{\Om^* W_t\} \zeta_t). 
\end{equation}
\end{lemma}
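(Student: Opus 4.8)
**The plan is to verify the change of variables by direct computation using the Stratonovich calculus, exploiting the commutativity of the groups.**

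First I would introduce the transformed unknown $\zeta_t = \exp\{-\Om^* W_t\}\mu_t$, equivalently $\mu_t = \exp\{\Om^* W_t\}\zeta_t$, where $\exp\{\Om^* W_t\} = \prod_j \exp\{\Om_j^* W_t^j\}$ is unambiguous because the operators commute. The key identity I would use is the duality $(\exp\{\Om^* w\}\nu, f) = (\nu, \exp\{\Om w\}f)$ for any $w\in\R^k$, which follows since each $T_j^*$ is the dual group of $T_j$. Hence for a fixed test function $f \in D$ we may write $(f,\mu_t) = (\exp\{\Om W_t\}f, \zeta_t)$, and the whole problem reduces to differentiating this pairing in the Stratonovich sense and matching it against \eqref{eqgenchar1}.

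The core step is the Stratonovich product rule applied to the product of the two $t$-dependent objects $g_t := \exp\{\Om W_t\}f$ and $\zeta_t$. Because Stratonovich differentials obey the ordinary Leibniz rule, $d(g_t,\zeta_t) = (dg_t, \zeta_t) + (g_t, d\zeta_t)$, with no second-order correction term appearing explicitly in the Stratonovich formalism. For the first term, since the groups $T_j$ commute and are generated by $\Om_j$, the chain rule (again valid in the Stratonovich sense) gives $\circ\, dg_t = \sum_j \Om_j \exp\{\Om W_t\}f \circ dW_t^j = \Om\, g_t \circ dW_t$, i.e. $(dg_t, \zeta_t) = \sum_j (\Om_j g_t, \zeta_t)\circ dW_t^j$. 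Substituting $g_t = \exp\{\Om W_t\} f$ back, and using duality once more, this equals $\sum_j(\Om_j f, \mu_t)\circ dW_t^j$, which is precisely the noise term on the right-hand side of \eqref{eqgenchar1}. Therefore, comparing with \eqref{eqgenchar1}, the remaining term $(g_t, d\zeta_t)$ must account for the drift: $(g_t, d\zeta_t) = (L_t(\mu_t)f, \mu_t)\, dt$. Since there is no Brownian differential on the left, $\zeta_t$ solves an equation with $dt$ only, and we may write $\frac{d}{dt}(g_t,\zeta_t) = (L_t(\mu_t)f,\mu_t)$. Rewriting the right-hand side: $(L_t(\mu_t)f, \mu_t) = (L_t(\mu_t)f, \exp\{\Om^* W_t\}\zeta_t) = (\exp\{\Om W_t\}L_t(\mu_t)f, \zeta_t)$. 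Inserting $f = \exp\{-\Om W_t\}g_t$ and recalling $\mu_t = \exp\{\Om^* W_t\}\zeta_t$, this becomes $(\exp\{\Om W_t\}L_t(\exp\{\Om^* W_t\}\zeta_t)\exp\{-\Om W_t\}g_t, \zeta_t) = (L^{dress}_t(\exp\{\Om^* W_t\}\zeta_t)g_t, \zeta_t) = (\tilde L_t(\zeta_t)g_t,\zeta_t)$, which is exactly \eqref{eqgenchar2} (after renaming $g_t$ as the test function, legitimate since $f$ was arbitrary in $D$ and $\exp\{\Om W_t\}$ maps $D$ to $D$).

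The main obstacle I anticipate is justifying the manipulations rigorously at the level of unbounded operators: that $g_t = \exp\{\Om W_t\}f$ stays in the domain $D$, that the Stratonovich chain rule $\circ\, d(\exp\{\Om W_t\}f) = \Om\exp\{\Om W_t\}f\circ dW_t$ is valid for strongly continuous (possibly unbounded-generator) groups rather than merely in finite dimensions, and that the Stratonovich–Itô conversion of the $(\Om f,\mu_t)\circ dW_t$ term is legitimate in $B^*$. In the simplified setting announced in the excerpt — where ultimately $\Om_j = \si_{com}^{(j)}\cdot\nabla$ generates the shift group, which is strongly continuous on the relevant function spaces and commutes with itself — these points are standard (cf. \cite{Kunita, KoTyu03}), so I would either cite those references for the stochastic calculus of flows or carry out the verification on a core of smooth compactly supported $f$ and pass to the limit. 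Everything else is bookkeeping with the duality relation and the commutativity of the exponentials.
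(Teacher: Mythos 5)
Your proposal is correct and takes essentially the same route as the paper's own proof: both rest on the duality identity between $\exp\{\Om W_t\}$ and $\exp\{\Om^* W_t\}$, the Stratonovich Leibniz/chain rule applied to the pairing with the transported test function, and the resulting cancellation of the noise term; the only difference is that you differentiate $(f,\mu_t)=(\exp\{\Om W_t\}f,\zeta_t)$ and then rename the test function, whereas the paper differentiates $(f,\zeta_t)=(\exp\{-\Om W_t\}f,\mu_t)$ directly. Your explicit attention to the domain and unboundedness issues is a welcome addition that the paper leaves implicit.
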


\begin{proof}
Since the Stratonovich differential is subject to the usual rules of calculus, we have
\[
 d(f, \zeta_t)=d(  \exp\{-\Om W_t\}f, \mu_t)=-(\Om\exp\{-\Om W_t\}f, \mu_t) \circ dW_t+d(g, \mu_t)|_{g=\exp\{-\Om W_t\}f}
 \]
 \[
 =(L_t(\mu_t) \exp\{-\Om W_t\}f, \mu_t) dt= (\exp\{\Om W_t\}L_t (\mu_t)\exp\{-\Om W_t\}f, \zeta_t),
 \]
 yielding  \eqref{eqgenchar2}. An alternative proof can be given by first rewriting equation  \eqref{eqgenchar2}
 in the Ito form and then perform the transformation using Ito's formula.
\end{proof}

As we are mostly interested in the sensitivity, let us formulate the corresponding abstract result,
which is a direct consequence of Lemma \ref{stochchara}.
Let us concentrate on the case when $B=C_{\infty}(\R^d)$ and $B^*=\MC(\R^d)$, where the derivatives
can be written in terms of the variational derivatives.

\begin{lemma}
\label{stochcharb}
Let equation \eqref{eqgenchar2} be well posed and its solutions $\zeta_t$ depend smoothly on the initial condition
in the sense that the variational derivatives $\de \zeta_t/ \de \zeta_0 (x)$ and $\de^2 \zeta/ \de \zeta_0 (x) \de \zeta_0 (y)$
exist as signed measures and are continuous bounded functions of $x$ and $y$.
Then the solutions  $\mu_t=  \exp\{\Om^* W_t\} \zeta_t$ to equation \eqref{eqlimMacVlaSPDEst} also depend  smoothly
on the initial condition $\mu_0=\zeta_0$  and the variational derivatives are given by the formulas
\begin{equation}
\label{eqgenchar3}
\left(f, \frac{\de \mu_t}{\de \mu_0(x)}\right) = \frac{\de}{\de \mu_0(x)}(f, \mu_t) =\frac{\de}{\de \zeta_0(x)}(\exp\{\Om W_t\} f, \zeta_t),
\end{equation}
\begin{equation}
\label{eqgenchar4}
\left(f, \frac{\de^2 \mu_t}{\de \mu_0(x) \de \mu_0(y)}\right) = \frac{\de^2}{\de \zeta_0(x)\de \zeta_0(y)}(\exp\{\Om W_t\} f, \zeta_t).
\end{equation}
\end{lemma}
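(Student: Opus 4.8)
The plan is to exploit the fact that, along any fixed Brownian path, passing between \eqref{eqlimMacVlaSPDEst} and \eqref{eqgenchar2} is just a conjugation by the (explicitly known) deterministic bounded operator $\exp\{\Om W_t\}$, under which differentiation in the initial datum is transparent. First I would note that since $W_0=0$ the substitution of Lemma \ref{stochchara} is the identity at $t=0$, so $\mu_0=\zeta_0$; hence perturbing the initial measure of the SPDE in the direction $\de_x$ is literally the same operation as perturbing the initial datum of \eqref{eqgenchar2} in the direction $\de_x$. Next, using that $\exp\{\Om^* W_t\}$ is by construction the Banach-space adjoint of $\exp\{\Om W_t\}$, for every admissible test function $f$ one has the pathwise identity
\[
(f,\mu_t)=(f,\exp\{\Om^* W_t\}\zeta_t)=(\exp\{\Om W_t\} f,\zeta_t).
\]
In the concrete situation $\Om=\si_{com}\nabla$ generates the commuting translation groups, so $\exp\{\Om W_t\}f(x)=f(x+\si_{com}W_t)$; in particular $\exp\{\Om W_t\}$ is bounded and maps each of the spaces $C^2(\R^d)$, $C^2_\infty(\R^d)$ into itself, which settles the question of whether the right-hand side is well defined.

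The first-derivative formula then follows by the chain rule. Fix $t$ and a Brownian path, put $g=\exp\{\Om W_t\}f$ --- a fixed element of $B$ not depending on the initial datum --- and apply the hypothesis that $\zeta_0\mapsto\zeta_t$ is variationally differentiable with $\de\zeta_t/\de\zeta_0(x)$ a signed measure depending in a bounded, weakly continuous way on $x$. Then
\[
\frac{\de}{\de\mu_0(x)}(f,\mu_t)=\frac{d}{dh}\Big|_{h=0}\big(g,\zeta_t[\zeta_0+h\de_x]\big)=\Big(g,\frac{\de\zeta_t}{\de\zeta_0(x)}\Big)=\frac{\de}{\de\zeta_0(x)}(\exp\{\Om W_t\} f,\zeta_t),
\]
which is \eqref{eqgenchar3}; moreover, since $g$ is a fixed bounded continuous function and $x\mapsto\de\zeta_t/\de\zeta_0(x)$ is a bounded, weakly continuous family of measures, $x\mapsto(g,\de\zeta_t/\de\zeta_0(x))$ is bounded and continuous, so $\de\mu_t/\de\mu_0(x)$ genuinely exists as a signed measure with the required regularity. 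For \eqref{eqgenchar4} I would simply iterate: the second variational derivative is defined by differentiating the first once more in $\zeta_0$ along $\de_y$, and because $g$ carries no dependence on $\zeta_0$ this differentiation passes straight through to $\zeta_t$, giving $(f,\de^2\mu_t/\de\mu_0(x)\de\mu_0(y))=\big(g,\de^2\zeta_t/\de\zeta_0(x)\de\zeta_0(y)\big)$, i.e. \eqref{eqgenchar4}, with boundedness and joint continuity in $(x,y)$ inherited from the corresponding property assumed for $\zeta_t$.

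I do not expect a serious analytic obstacle here --- the substantive work is hidden in the hypothesis, namely the well-posedness and smooth dependence for the random-coefficient PDE \eqref{eqgenchar2}, which is treated separately. The points that need care are bookkeeping rather than conceptual: (i) checking that $\exp\{\Om W_t\}f$ lies in the class of test functions for which \eqref{eqgenchar2} is well posed --- handled by the explicit translation form of the group together with density of the core $D$ and strong continuity of the $T_j$; (ii) ensuring the perturbed data $\zeta_0+h\de_x$ stay in the set on which well-posedness and smoothness are assumed for $|h|$ small, which is implicit in the statement (e.g. working inside $\MC_{<\la}$ rather than $\MC_{\le\la}$); and (iii) the interchange of $\frac{d}{dh}|_{h=0}$ with the fixed pairing against $g$, which is immediate from the definition of the variational derivative of $\zeta_t$. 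Assembling these gives the lemma as a direct corollary of Lemma \ref{stochchara}.
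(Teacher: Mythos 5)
Your proposal is correct and follows exactly the route the paper intends: the paper offers no written proof of this lemma, simply declaring it ``a direct consequence of Lemma \ref{stochchara}'', and your argument --- using $\mu_0=\zeta_0$ (since $W_0=0$), the pathwise adjoint identity $(f,\mu_t)=(\exp\{\Om W_t\}f,\zeta_t)$, and the observation that $g=\exp\{\Om W_t\}f$ is a fixed test function independent of the initial datum so that differentiation in $\zeta_0$ passes straight through the pairing --- is precisely the bookkeeping that justifies that declaration. Nothing further is needed.
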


\section{A well posedness result} 

In this section we shall study the well-posedness of equations
\eqref{eqlimMacVlaSPDE} or \eqref{eqlimMacVlaSPDEst}.

Equation \eqref{eqlimMacVlaSPDEst} is an example of equation \eqref{eqgenchar1} with
\[
\Omega v(x) = (\si_{com},\nabla)v(x)=\{\Omega^j v(x)\}= \{ \sum_k \si_{com}^{jk} \nabla_k v(x)\}.
\]
Operator $\Omega$ has the dual operator
\[
\Omega' v(x) = -\Omega v(x)=(-\si_{com},\nabla)v(x),
\]
and generate $d'$ semigroups
\[
T_j(t)v(x)=v(x+\si_{com}^{j .}t)=v(x_j+\si_{com}^{jk}t),
\]
solving the Cauchy problems for the equations
\[
\frac{\pa v}{\pa t}(t,x) =\sum_j \si_{com}^{jk} \frac{\pa v}{\pa x_j}(t,x).
\]

These semigroups commute and define the action $T(t_1, \cdots , t_{d'})$ of $\R^{d'}$ on $(\R^d)$ by the formula
\[
T(t_1, \cdots , t_{d'}): v(x) \mapsto v(x+ \sum_j \si_{com}^{j.} t_j).
\]

According to Lemma \ref{stochchara}, equation  \eqref{eqlimMacVlaSPDEst} rewrites in terms
of the measures $\zeta_t=T^*(-W_t)\mu_t$ as equation \eqref{eqgenchar2}
\begin{equation}
\label{eqLdres1}
\frac{d}{dt}(f,\zeta_t)=(\tilde L_t (\zeta_t)f,\zeta_t)= (L^{dress}_t( \zeta_t(x-\si_{com}W_t)) f, \zeta_t),
\end{equation}
with
\[
L^{dress}_t(\mu)f(x)= \exp\{\Om W_t\}L(\mu) \exp\{-\Om W_t\}f(x)
\]
\begin{equation}
\label{eqLdres}
=(b(x+\si_{com} W_t,\mu) , \nabla) f(x)- a(x+\si_{com} W_t)|\Delta|^{\alpha/2}f(x)
\end{equation}
and 
\[
\tilde L_t(\zeta_t)f(x)=L^{dress}_t(\zeta_t(x-\si_{com}W_t))f(x)
\]
 \begin{equation}
 \label{eqLdres1_}
=(\tilde b(W_t,x,\zeta_t), \nabla) f(x)- \tilde a(W_t,x)|\Delta|^{\alpha/2}f(x),
\end{equation}
where
\[
\tilde b(W_t,x,\zeta_t)= b(x+\si_{com} W_t, \zeta_t(x-\si_{com}W_t)), \quad \tilde a (W_t,x)=a(x+\si_{com} W_t).
\]

We will use several regularity assumptions for the function $b(x,\mu)$ depending on $x$ and $\mu\in \MC(\R^d)$.
For all practical purposes the dependence of $b$ on $\mu$ is expressed in terms of certain integral functionals, i.e. is of the type
 \begin{equation}
\label{eqfuncb}
b(x,\mu)=b(x, I_1, \cdots, I_{\rho}),
\end{equation}
\[
I_m=\int_{\R^{dl_m}} B_m(x, y_1, \cdots , y_{l_m} )\mu(dy_1) \cdots \mu(dy_{l_m}),
\]       
with symmetric in $y$s functions $B_m$,
for which all variational derivatives belong to the same class:
 \begin{equation}
\label{eqfuncbder}
\frac{\de b(x,\mu)}{\de \mu(z)} =\sum_m \frac{\pa b}{\pa I_m}(x, I_1, \cdots, I_{\rho}) 
l_m \int_{\R^{d(l_m-1)}} B_m(x, z, y_2, \cdots , y_{l_m} )\mu(dy_2) \cdots \mu(dy_{l_m}), 
\end{equation}
and hence all regularity conditions can be rewritten in terms of the usual smoothness of functions $B_m$. 
 
Let us introduce the following conditions:

(C1) Function $a(x)\in C_\infty^2\left(\mathbf{R}^d\right)$ and satisfies the
inequalities
\[
M^{-1} \le a(x) \le M
\]
for all $x\in \mathbf{R}^{d}$ and a constant $M >1$;

(C2) Function $b(x,\mu)$ is continuous and bounded on $\R^d\times \MC(\R^d)$, $b(.,\mu)\in C^2(\R^d)$, and $b$ is Lipshitz continuous as a function of $x$, uniformly in other variables;

(C3) The first and second order variational derivatives of $b(x,\mu)$ with respect to $\mu$ are well defined, bounded and
\[
b(x,.)\in (C^{2,1\times 1}\cap C^{1,2})(\MC_{\le \la}(\R^d))
\]
for any $\la>0$.

The well posedness of this  nonlinear non-stochastic equation with random coefficients, \eqref{eqgenchar2} or \eqref{eqLdres1} follows from the general results on the well-posedness of nonlinear stable-like equations  
from \cite{Kol2000, Kol2007, Kol1, KolTro16}. Due to the equivalence of this equation with equation \eqref{eqlimMacVlaSPDEst}
we obtain the following well-posedness result for \eqref{eqlimMacVlaSPDEst}.
   
\begin{theorem}
	\label{thwellposMcVla_StL}
Under assumptions (C1)-(C3) and any given $T>0$ the following holds.

(i) The Cauchy problem for equation \eqref{eqlimMacVlaSPDEst} is well posed almost surely, that is, for any initial condition $Y\in \MC^+(\R^d)$ 
it has the unique bounded nonnegative solution $\mu_t(Y)$ such that $\zeta_t=\exp\{-\Om^* W_t\}\mu_t$ solves \eqref{eqgenchar2} and \eqref{eqLdres1}.

(ii) For all $t>0$, $\|\zeta_t\|\le \|Y\|$ and $\zeta_t$ have densities, $g_t$, with respect to Lebesgue measure, which satisfies also to the mild equation
\begin{equation}
\label{eq1athwellposMcVla2multi}
g_t(x)=\int G_t(x,y) Y(y)(dy)	- \int_0^t ds \int \frac{\pa}{\pa y}\left( G_{(t-s)} (x,y) \tilde b(W_s,y, \zeta_s)\right) g_s (y)\,  dy.
\end{equation} 
Consequently, $\|\mu_t\|\le \|Y\|$ and $\mu_t$ also have densities, $v_t$ 
and $\mu_t(dy)=v_t(y)dy \to Y$ weakly, as $t \to 0$. If the initial condition $Y$ has a density, $v_0$, then $v_t\to v_0$ in $L^1(\R^d)$.

(iii)	For any two solutions $\mu_t^1$ and $\mu_t^2$ of \eqref{eqlimMacVlaSPDEst}
with the initial conditions $Y^1,Y^2$ the estimate
\begin{equation}
\label{eq2thwellposMcVlaa}
\|\mu_t^1 -\mu_t^2\|_{L_1(R^d)} \le \|Y^1-Y^2\|_{\MC(R^d)}C(T,\|Y^1\|)
\end{equation}
with $C$ depending on the bounds of the derivatives in conditions (C1)-(C3).
\end{theorem}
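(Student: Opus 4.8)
The plan is to reduce the whole statement, pathwise in the Brownian trajectory, to the deterministic equation \eqref{eqLdres1} and then quote the established theory of nonlinear stable-like equations. First I would record that the operators $\Om^j=(\si_{com}^{j\cdot},\nabla)$ generate the translation groups $T_j(t):v(x)\mapsto v(x+\si_{com}^{j\cdot}t)$ on $B=C_\infty(\R^d)$; these are strongly continuous and mutually commuting, so Lemma \ref{stochchara} applies and, for every continuous sample path $W_\cdot$, the change of unknown $\zeta_t=\exp\{-\Om^*W_t\}\mu_t$ turns \eqref{eqlimMacVlaSPDEst} into the non-stochastic equation \eqref{eqLdres1} with the (path-dependent, time-dependent) coefficients $\tilde b(W_t,x,\zeta_t)=b(x+\si_{com}W_t,\zeta_t(x-\si_{com}W_t))$ and $\tilde a(W_t,x)=a(x+\si_{com}W_t)$ of \eqref{eqLdres}--\eqref{eqLdres1_}. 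Thus it is enough to prove (i)--(iii) for $\zeta_t$ and then transfer back; Lemma \ref{stochcharb} will later serve the same purpose for the sensitivity statements. A harmless bookkeeping point is that \eqref{eqlimMacVlaSPDEst} is tested on $f\in C^2(\R^d)$ whereas Lemma \ref{stochchara} is stated on $C_\infty(\R^d)$: since translations preserve $C_\infty^2(\R^d)$ and this space is dense in $C_\infty(\R^d)$, the two formulations coincide.

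The core step is to check that, for each fixed continuous path restricted to $[0,T]$, the coefficients $\tilde a,\tilde b$ satisfy the hypotheses of the general well-posedness and regularity theory for nonlinear stable-like equations of \cite{Kol2000,Kol2007,Kol1,KolTro16}. The point is that $x\mapsto x+\si_{com}W_t$ is an isometry of $\R^d$ and $t\mapsto W_t$ is almost surely bounded and continuous on $[0,T]$. Consequently: condition (C1) gives $M^{-1}\le\tilde a(W_t,x)\le M$ with the same $M$ and $\tilde a(W_t,\cdot)\in C_\infty^2(\R^d)$ with uniformly bounded norm; (C2) is preserved with the same supremum and Lipschitz bounds in $x$; and (C3), expressed through the integral representation \eqref{eqfuncb}--\eqref{eqfuncbder}, is preserved because the kernels $B_m$ are merely precomposed with an isometric shift, so the variational derivatives of $\tilde b$ inherit the bounds of those of $b$. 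The continuity of $W_\cdot$ supplies the joint regularity in $t$ required by that theory. Hence the quoted results give, for any $Y\in\MC^+(\R^d)$, a unique bounded nonnegative solution $\zeta_t$ of \eqref{eqLdres1}, the densities $g_t$ for $t>0$ together with the Duhamel/mild identity \eqref{eq1athwellposMcVla2multi} (with $G_t(x,y)$ the Green function of the stable-like part and the drift entering as a bounded perturbation after integration by parts), the convergence $\zeta_t\to Y$ weakly as $t\to0$ and in $L^1$ if $Y$ has a density, and the $L^1$-stability estimate for two initial data with a constant governed by the bounds in (C1)--(C3), which, as just noted, do not depend on the path. The bound $\|\zeta_t\|\le\|Y\|$ is obtained by testing \eqref{eqLdres1} against $f=1$: since $\exp\{\Om W_t\}1=1$ one has $\tilde L_t(\zeta_t)1=0$, so $(1,\zeta_t)$ is constant and equal to $\|Y\|$.

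It then remains to transfer back. By Lemma \ref{stochchara}, $\mu_t=\exp\{\Om^*W_t\}\zeta_t$ solves \eqref{eqlimMacVlaSPDEst}, and uniqueness for $\mu_t$ follows from uniqueness for $\zeta_t$ since $\zeta\mapsto\mu$ is a bijection. The operator $\exp\{\Om^*W_t\}$ is the push-forward of measures under the translation $x\mapsto x+\si_{com}W_t$, an $L^1$-isometry preserving total mass and positivity; hence $\mu_t\ge0$, $\|\mu_t\|=\|\zeta_t\|\le\|Y\|$, and $\mu_t$ has density $v_t(x)=g_t(x-\si_{com}W_t)$. Weak convergence $\mu_t\to Y$ as $t\to0$ follows from $\zeta_t\to Y$ weakly, from $W_t\to0$, and from weak continuity of translations on bounded measures; if $Y$ has density $v_0$, then $g_t\to v_0$ in $L^1$ combined with $L^1$-continuity of translations gives $v_t\to v_0$ in $L^1$. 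Finally, \eqref{eq2thwellposMcVlaa} is the deterministic $L^1$-estimate for $\zeta_t^1-\zeta_t^2$ read through the isometric shift, using $\zeta_0^i=Y^i$.

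The main obstacle I anticipate is precisely the verification in the second step: making sure that \emph{all} structural and regularity hypotheses of the cited nonlinear stable-like theory (uniform ellipticity of $\tilde a$, its $C_\infty^2$-regularity, the boundedness, Lipschitz-in-$x$ and variational-derivative conditions on $\tilde b$, and the joint regularity in $t$) are met, and met with constants that are finite for almost every Brownian path, so that the pathwise argument indeed produces an almost-surely well-defined solution with the stated uniform estimates. Once this translation between the abstract framework of Lemma \ref{stochchara} and the concrete PDE results is carried out carefully, the theorem follows as a pathwise corollary.
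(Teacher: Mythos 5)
Your proposal is correct and follows essentially the same route as the paper: reduce \eqref{eqlimMacVlaSPDEst} pathwise to the non-stochastic equation \eqref{eqLdres1} via Lemma \ref{stochchara}, observe that the shifted coefficients $\tilde a,\tilde b$ satisfy (C1)--(C3) with the same bounds since they arise from $a,b$ by an isometric translation, and then invoke the well-posedness theory for nonlinear stable-like equations from \cite{Kol2000,Kol2007,Kol1,KolTro16} before transferring back by the mass- and positivity-preserving translation $\exp\{\Om^*W_t\}$. Your write-up in fact spells out several verifications (mass conservation by testing against $f=1$, the $L^1$-isometry of the shift, the small-time limits) that the paper leaves implicit.
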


Let us stress that the coefficients of equation  \eqref{eqlimMacVlaSPDEst} are random, but the estimates of growth \eqref{eq2thwellposMcVlaa} 
are deterministic, because of the uniform bounds for coefficients $\tilde b$ and $\tilde a$. In fact, 
the constants $C(T,\|Y^1\|)$ on the r.h.s. of \eqref{eq2thwellposMcVlaa} can be naturally expressed in terms of the Mittag-Leffler functions, see \cite{Kol18}.  

\section{Sensitivity: first order}

In this section we shall study the sensitivity for the nonlinear stochastic McKean-Vlasov equations \eqref{eqlimMacVlaSPDEst}, that is, 
the derivatives 
\begin{equation}
	\label{eqvarderdef}
	\xi_t(x;.)=\xi_t(x;.)[\mu_0]=\frac{\de \mu_t}{\de \mu_0(x)}=\frac{d}{dh}|_{h=0} \mu_t[\mu_0+h\de_x],
\end{equation}
and
\[
\eta_t(x,z;.)=\eta_t(x,z;.) [\mu_0]=\de^2\mu_t/\de \mu_0(x)\de \mu_0(z). 
\]

By Lemma  \ref{stochcharb} these derivatives are expressed in terms of the derivatives of the non-stochastic equation    
\eqref{eqgenchar2} or \eqref{eqLdres1}. For nonstochastic equations of this kind the sensitivity was obtained in our previous paper \cite{KolTro16},
which implies the point-wise (for almost all trajectories $W_t$) sensitivity of   \eqref{eqlimMacVlaSPDEst}.
However, what is important for applications is to have estimates of growth of the derivatives $\xi, \eta$ in various functional spaces
that are either deterministic or at least bounded in expectation. For the case of constant correlations $\sigma_{com}$ we shall be able to get 
bounds that are deterministic. To this end, we have to look at the main stages of the proof of the sensitivity of           
\eqref{eqgenchar2} and \eqref{eqLdres1}  and to see exactly how the estimates depend on the coefficients. 
The key point to note is that, since $\tilde b$ and $\tilde a$ are obtained by the shifting of $b$ and $a$, the assumptions (C1)-(C3)
on $b$ and $a$ are equivalent to the same assumptions on $\tilde b$ and $\tilde a$ with the same bounds on the norms in all spaces involved.

\begin{theorem}
\label{thdervarder2multiSens}
 Under Conditions (C1)--(C3) the mapping $Y=\mu_0 \mapsto \mu_t$ from Theorem \ref{thwellposMcVla_StL} is continuously differentiable in $Y$, so that 
the derivative \eqref{eqvarderdef} is well defined as a continuous function of two variables such that 
\begin{equation}
\label{eq1thdervarder2multiSens}
\sup_x \sup_{\mu_0\in \MC^+_{\le \la}(\R^d)} \|\xi_t(x;.)[\mu_0]\| \le C(T,\la, \|Y\|)
\end{equation}   
for $t\in [0,T]$ uniformly for all values of $W_t$. Moreover, the variational derivative $\xi_t(x;.)$
are twice differentiable in $x$ weakly, as the functionals on the spaces of smooth functions, that is,
\[
\pa \xi_t(x;.)/\pa x\in (C^1_{\infty}(\R^d))^*, \quad
\pa^2 \xi_t(x;.)/\pa x^2 \in (C^2_{\infty}(\R^d))^*,
\]
and
\[
\pa \xi_t(x;.)/\pa x|_{(C^1_{\infty}(\R^d))^*} \le  C(T,\la, \|Y\|),
\]
\begin{equation}
\label{eq2thdervarder2multiSens}
\pa \xi_t(x;.)/\pa x|_{(C^2_{\infty}(\R^d))^*} \le  C(T,\la, \|Y\|),
\end{equation}
again uniformly for all values of the noise $W_t$.
\end{theorem}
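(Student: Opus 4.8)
The plan is to push the whole question through the stochastic characteristics of Lemmas \ref{stochchara} and \ref{stochcharb}: the derivatives of $\mu_t$ are expressed, via the translation operators $\exp\{\pm\Om W_t\}$, through the derivatives of the \emph{non-stochastic} equation \eqref{eqLdres1} for $\zeta_t=\exp\{-\Om^* W_t\}\mu_t$, for which the first-order sensitivity was already established in \cite{KolTro16}. First I would record the observation — stressed just before the theorem — that the dressed coefficients
\[
\tilde b(W_t,x,\zeta)=b(x+\si_{com}W_t,\zeta(\cdot-\si_{com}W_t)),\qquad \tilde a(W_t,x)=a(x+\si_{com}W_t)
\]
arise from $b,a$ by a spatial shift, which preserves Lebesgue measure and all the smoothness classes occurring in (C1)--(C3); in \eqref{eqfuncbder} the shift merely changes variables in the integrals and in the argument $z$ of $\de b/\de\mu(z)$. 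Hence $\tilde b,\tilde a$ satisfy (C1)--(C3) with exactly the same constants, for every realisation of $W_t$.

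\textbf{Applying the non-stochastic sensitivity pathwise.} Next, fixing a trajectory of $W$, I would regard \eqref{eqLdres1} as a non-stochastic nonlinear stable-like McKean--Vlasov equation with time-dependent coefficients obeying (C1)--(C3) uniformly in $t$, and invoke the sensitivity results of \cite{KolTro16}. They give that $\mu_0\mapsto\zeta_t$ is continuously differentiable; that $\tilde\xi_t(x;\cdot)=\de\zeta_t/\de\zeta_0(x)$ exists as a signed measure, continuous in $t$, $x$ and $\zeta_0$ (and, by Theorem \ref{thwellposMcVla_StL}(ii), with a density jointly continuous in its two spatial variables); and that
\[
\sup_x\ \sup_{\zeta_0\in\MC^+_{\le\la}(\R^d)}\ \|\tilde\xi_t(x;\cdot)\|\le C(T,\la,\|Y\|),\qquad t\in[0,T].
\]
The point I would then emphasise is that the constant $C$ there depends on the coefficients only through the bounds in (C1)--(C3), and those bounds are untouched by the shift by $\si_{com}W_t$; so the estimate is one and the same for all values of $W_t$.

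\textbf{Transfer back to $\mu_t$.} By Lemma \ref{stochcharb}, for every $f\in C_\infty(\R^d)$ one has
\[
\left(f,\frac{\de\mu_t}{\de\mu_0(x)}\right)=\frac{\de}{\de\zeta_0(x)}\big(\exp\{\Om W_t\}f,\zeta_t\big)=\big(\exp\{\Om W_t\}f,\tilde\xi_t(x;\cdot)\big),
\]
i.e. $\xi_t(x;\cdot)=\exp\{\Om^* W_t\}\tilde\xi_t(x;\cdot)$. Since $\exp\{\Om W_t\}$ is the translation $f(\cdot)\mapsto f(\cdot+\si_{com}W_t)$, it is an isometric isomorphism of $C_\infty(\R^d)$ and of each $C^n_\infty(\R^d)$ onto itself, with dual an isometry of $\MC(\R^d)$. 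I would conclude that $\xi_t(x;\cdot)$ is again a signed measure with $\|\xi_t(x;\cdot)\|=\|\tilde\xi_t(x;\cdot)\|$, which gives \eqref{eq1thdervarder2multiSens}; continuous differentiability of $Y\mapsto\mu_t$ and joint continuity of $\xi_t(x;\cdot)$ in $(t,x,\mu_0)$ follow by composing the corresponding properties of $\zeta_t$ with the bounded linear map $\exp\{\Om^* W_t\}$.

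\textbf{Weak $x$-regularity, and the main obstacle.} Since by definition $\pa\xi_t(x;\cdot)/\pa x$ and $\pa^2\xi_t(x;\cdot)/\pa x^2$ act on a test function $f$ through $\pa_x$ and $\pa^2_x$ of $x\mapsto(\exp\{\Om W_t\}f,\tilde\xi_t(x;\cdot))$, and translation preserves $C^1_\infty$- and $C^2_\infty$-norms, the bounds \eqref{eq2thdervarder2multiSens} reduce to the analogous bounds for $\pa_x\tilde\xi_t$ on $C^1_\infty(\R^d)$ and $\pa^2_x\tilde\xi_t$ on $C^2_\infty(\R^d)$, which are part of the analysis of \cite{KolTro16}: there one differentiates the mild (Duhamel) equation for $\zeta_t$ in $\zeta_0$, getting a linear equation for $\tilde\xi_t(x;\cdot)$ with datum $\de_x$; one spatial derivative of the stable propagator (order $\al\in(1,2)$) costs the integrable factor $(t-s)^{-1/\al}$, which controls $\pa_x\tilde\xi_t$ on $C^1_\infty$, and for the second $x$-derivative one transfers the derivatives onto the test function — using that the stable kernel depends on the spatial difference and that $\de b/\de\mu$ is $C^2$ in its spatial argument by (C3), which is precisely why the second derivative is tested only against $C^2_\infty$ functions — after which a Gronwall argument closes the estimates with a constant of the form $C(T,\la,\|Y\|)$, uniform in $W_t$. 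I expect the only real difficulty to lie in the bookkeeping: checking that the dressing $b\mapsto\tilde b$ leaves the classes $C^{2,1\times1}\cap C^{1,2}(\MC_{\le\la}(\R^d))$ and their norms unchanged, and that the constants of \cite{KolTro16} genuinely depend on the coefficients only through the quantities in (C1)--(C3) — in particular not through any modulus of continuity in $t$ of the merely $t$-measurable $\tilde b(W_t,\cdot,\cdot)$ — which holds because those stable-like estimates freeze the coefficients at each instant and are driven by the ellipticity constant $M$ and the spatial Lipschitz constant of $b$, both shift-invariant.
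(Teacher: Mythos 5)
Your proposal is correct and follows essentially the same route as the paper: reduce to the non-stochastic equation \eqref{eqLdres1} via Lemma \ref{stochcharb}, observe that the dressing of $b,a$ by the shift $\si_{com}W_t$ preserves conditions (C1)--(C3) with the same constants, invoke the sensitivity results of \cite{KolTro16} for the linearized (weak/mild) equation satisfied by $\tilde \xi_t$, and transfer the uniform bounds back to $\xi_t$ through the translation isometry. Your write-up is in fact more explicit than the paper's proof about the isometric transfer back to $\mu_t$ and about where the constants of \cite{KolTro16} come from, but no new idea is involved.
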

      
\begin{proof} By Lemma \ref{stochcharb} it is sufficient to prove the result for the mapping 
$Y=\mu_0 \to \zeta_t=\exp\{-\Om^* W_t\}\mu_t$ solving \eqref{eqgenchar2} or \eqref{eqLdres1}.
As mentioned above, since this equation is not stochastic we can apply the results of paper \cite{KolTro16}
to derive that all required derivatives are well defined and moreover, $\tilde \xi_t=\de \zeta_t/\de \mu_0(x)$ itself 
and its derivatives $\pa \tilde \xi_t(x;.)/\pa x$, $\pa^2 \tilde \xi_t(x;.)/\pa x^2$
solve the weak equation
\[
\frac{d}{dt}(f,\tilde \xi_t(x;.))=\left((\tilde b(W_t,. \, ,\zeta_t),\nabla)f -\tilde{a}(W_t,.)|\Delta|^{\alpha/2}f, \tilde \xi_t(x;.)\right)
\]
\begin{equation}
\label{eq3thdervarder2multiSens}
+\int \int \left( \frac{\de \tilde b(W_t, y, \zeta_t)}{\de \zeta_t(w)}\tilde \xi_t(x,w), \nabla f(y) \right) \zeta_t(y)\, dy dw,
\end{equation}
 obtained by the formal differentiation of equation \eqref{eqgenchar2}, \eqref{eqLdres1}. The solution to this equation is obtained by 
 considering the last term as the perturbation to the equation defined by the first term, which in its turn is solved by duality
 from the dual backward equation (in backward time) 
\begin{equation}
\label{eq4thdervarder2multiSens}
\frac{d}{dt}f=-(\tilde b(W_t,. \, ,\zeta_t),\nabla)f +\tilde{a}(W_t,.)|\Delta|^{\alpha/2}f.
\end{equation}            
We can see now that, due to the uniformity (with respect to the noise) of the norms in (C1)-(C3), all bounds for the solutions to 
equation \eqref{eq3thdervarder2multiSens} obtained in \cite{KolTro16} are also uniform completing the proof.  
\end{proof}

\section{Sensitivity: second order}

Similarly to Theorem \ref{thdervarder2multiSens} we can now derive the following result on the second order sensitivity for the McKean-Vlasov equation  (\ref{eqlimMacVlaSPDEst}).

We obtain for $\tilde\eta_t=\de^2 \zeta_t/\de Y(x)\de Y(z)$
the weak equation by differentiating equation  (\ref{eq3thdervarder2multiSens}) 
\[
\frac{d}{dt}(f,\tilde\eta_t(x,z;.))
=\left((\tilde b(W_t, . \, ,\zeta_t),\nabla)f -\tilde{a}(W_t,.)|\Delta|^{\alpha/2}f, \tilde\eta_t(x,z;.)\right)+(f,q_t)
\]
\begin{equation}
\label{McVlassecderSt}
+\int \int \left( \frac{\de \tilde b_{}(W_t, y, \zeta_t)}{\de \zeta_t(w)}\tilde\eta_t(x,z;w), \nabla f(y) \right) \zeta_t(y)\, dy dw
\end{equation}
with $(f,q_t)$ being given by
\[
\int \int \left( \frac{\de \tilde b(W_t, y, \zeta_t)}{\de \zeta_t(w)}, \nabla f(y) \right)
[\tilde\xi_t(x;y)\tilde\xi_t(z;w)+ \tilde\xi_t(x;w)\tilde\xi_t(z;y)] \, dy dw
\]
\begin{equation}
\label{McVlassecder1St}
+ \int \int \iint
\left( \frac{\de ^2 \tilde b(W_t, y, \zeta_t)}{\de \zeta_t(w)\de \zeta_t(u)}, \nabla f(y) \right)
\tilde\xi_t(x;w)\tilde\xi_t(z;u) \zeta_t(y) \, dy dw du,
\end{equation}
which should be satisfied with the vanishing initial condition.

This is the same equation as (\ref{eq3thdervarder2multiSens}), but with the additional non-homogeneous term $(f, q_t)$. 

The structure of (\ref{McVlassecder1St}) conveys an important message that for this analysis one needs the exotic spaces
$C^{2,k \times k} (\MC^+_{\le \la}(\R^d))$ introduced in the introduction.

\begin{theorem}
	\label{propdervardersecSt}
	(i) Under Conditions (C1)--(C3) the mapping $Y=\mu_0 \mapsto \mu_t$ from Theorem \ref{thwellposMcVla_StL} is twice continuously differentiable in $Y$, so that the derivative $\eta_t(x,z;.)$ is well defined as a continuous function of three variables such that 
\begin{equation}
\label{eq1thdervarder2multiSens}
\sup_x \sup_z \sup_{\mu_0\in \MC^+_{\le \la}(\R^d)} \|\eta_t(x,z;.)[\mu_0]\| \le C(T,\la, \|Y\|)
\end{equation}   
for $t\in [0,T]$ uniformly for all values of $W_t$. 

Moreover, the derivatives of $\eta_t(x,z;.)$ with respect to $x$ and $z$ of order at most one are well-defined as elements of $(C^2(\R^d))^*$ and
	\begin{equation}
		\label{eq4propdervardersec}
			\|\frac{\pa^{\ga}}{\pa x^{\ga}} \frac{\pa^{\be}}{\pa z^{\be}} \eta_t(x,z;.)\|_{(C^2(\R^d))^*}
		\le   C(T,\la, \|Y\|)
	\end{equation}
	for $\ga,\be=0,1$.
\end{theorem}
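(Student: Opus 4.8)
The plan is to reduce everything, via Lemma \ref{stochcharb}, to the non-stochastic equation \eqref{eqLdres1} for $\zeta_t$ and to re-run the second-order sensitivity analysis of \cite{KolTro16} for $\tilde\eta_t=\de^2\zeta_t/\de Y(x)\de Y(z)$, tracking the dependence of all estimates on the coefficients. Since $\tilde b$ and $\tilde a$ are obtained from $b$ and $a$ by a deterministic shift by $\si_{com}W_t$, Conditions (C1)--(C3) hold for $\tilde b,\tilde a$ with exactly the same bounds on all norms, uniformly in $W_t$; this is the mechanism that will turn the a.s.\ estimates of \cite{KolTro16} into estimates uniform in the noise, and then $\mu_t=\exp\{\Om^*W_t\}\zeta_t$ inherits them because $\exp\{\Om W_t\}$ acts isometrically on the relevant function spaces. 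So the real content is the well-posedness and the bounds for the linear non-homogeneous equation \eqref{McVlassecderSt}.

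First I would establish existence and uniqueness of $\tilde\eta_t$ by the same duality/perturbation scheme used for Theorem \ref{thdervarder2multiSens}: the homogeneous part of \eqref{McVlassecderSt} is the same transport--fractional-diffusion operator $(\tilde b,\nabla)-\tilde a|\Delta|^{\al/2}$ whose dual backward equation \eqref{eq4thdervarder2multiSens} is well posed with propagator bounded on $C_\infty(\R^d)$ and smoothing in the sense needed (mapping $C_\infty$ into $C^2_\infty$ with the standard $t^{-\cdot}$ fractional rate, integrable since $\al\in(1,2)$); the term involving $\de\tilde b/\de\zeta_t(w)$ integrated against $\tilde\eta_t(x,z;w)\,\zeta_t(y)\,dy\,dw$ is a bounded perturbation by (C3) plus the already-proved bound $\|\tilde\xi_t\|\le C$ and $\|\zeta_t\|\le\|Y\|$; and the inhomogeneity $q_t$ from \eqref{McVlassecder1St} is, by (C3) together with Theorem \ref{thdervarder2multiSens}, bounded in the norm dual to $C^2(\R^d)$ uniformly in $t\in[0,T]$ and in $W_t$ — here one tests $q_t$ against $f\in C^2$, so only $\nabla f$ (bounded) appears, the $C^{2,1\times1}$ and $C^{1,2}$ parts of (C3) control $\de\tilde b/\de\zeta$ and $\de^2\tilde b/\de\zeta^2$ in the $w,u$ variables, and the $\tilde\xi$ factors are controlled in $x$-uniform norm. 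A Duhamel/Gronwall estimate along $[0,T]$ then gives \eqref{eq1thdervarder2multiSens} and, after applying the derivatives $\pa^\ga_x\pa^\be_z$ (which pass through the $x,z$-independent operator and hit only the $\tilde\xi$-factors in $q_t$ and the Cauchy data, and are controlled by \eqref{eq2thdervarder2multiSens}), the bound \eqref{eq4propdervardersec}. Finally, continuity of $\eta_t$ as a function of $(x,z,\mu_0)$ and the ``twice continuously differentiable'' claim follow from the continuous dependence in \cite{KolTro16} combined with estimate \eqref{eq2thwellposMcVlaa} for $\zeta$ and the dominated-convergence/Gronwall argument applied to the difference of two copies of \eqref{McVlassecderSt}.

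The main obstacle, as the text already hints, is bookkeeping the function spaces: the inhomogeneity \eqref{McVlassecder1St} forces one to work with $\de^2\tilde b/\de\zeta_t(w)\de\zeta_t(u)$ as an element of $C^{2,1\times1}\cap C^{1,2}$ in its measure-arguments while simultaneously pairing with $\nabla f$ for $f\in C^2$, and to check that the products $\tilde\xi_t(x;w)\tilde\xi_t(z;u)$ live in exactly the space dual to where $\de^2\tilde b/\de\zeta^2$ lives, so that the pairing in \eqref{McVlassecder1St} is finite and bounded by $C(T,\la,\|Y\|)$; getting this compatibility right is precisely why the spaces $C^{2,k\times k}(\MC^+_{\le\la}(\R^d))$ were introduced. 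A secondary technical point is verifying that taking $\pa_x,\pa_z$ of $\tilde\eta_t$ only costs what \eqref{eq2thdervarder2multiSens} already provides — i.e.\ that no extra spatial derivative lands on $\tilde a|\Delta|^{\al/2}$ in a way that is not absorbed by the smoothing of the backward propagator — but this is handled exactly as in the first-order case and in \cite{KolTro16}, so I would cite that argument rather than repeat it.
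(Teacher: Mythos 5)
Your proposal is correct and follows essentially the same route as the paper: the paper derives equation \eqref{McVlassecderSt} for $\tilde\eta_t$ as the first-order equation \eqref{eq3thdervarder2multiSens} plus the inhomogeneity $q_t$ of \eqref{McVlassecder1St}, and then simply states that the proof is analogous to Theorem \ref{thdervarder2multiSens} via the results of \cite{KolTro16}, with uniformity in the noise coming from the shift-invariance of Conditions (C1)--(C3). Your write-up actually supplies more detail than the paper does (the Duhamel/Gronwall step, the role of the $C^{2,k\times k}$ spaces in pairing $\de^2\tilde b/\de\zeta^2$ with the $\tilde\xi$ factors), all of it consistent with the paper's intended argument.
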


\begin{proof} The proof is analogous to that of Theorem \ref{thdervarder2multiSens} and is based on the application of the results of paper \cite{KolTro16}.
%
%
\end{proof}
\section{ACKNOWLEDGMENTS}
The second author's work has been supported by the Ministry of Education and Science of the Russian Federation (Grant No. 1.6069.2017/8.9).

\end{document}